\newtheorem{thm}{Theorem}[section]
\newtheorem{cor}{Corollary}[section]
\newtheorem{prop}{Proposition}[section]
\theoremstyle{definition}
\newtheorem{defn}{Definition}[section]
\newtheorem{ex}{Example}[section]
\title{The Topological Complexity of Finite Models of Spheres}
\author{Shelley Kandola}
\begin{document}

\maketitle

\begin{abstract}
In \cite{Farber2001}, Farber defined topological complexity (TC) to be the minimal number of continuous motion planning rules required to navigate between any two points in a topological space.
Papers by \cite{Gonzalez2017} and \cite{Fernandez-Ternero2018} define notions of topological complexity for simplicial complexes.
In \cite{Tanaka2018}, Tanaka defines a notion of topological complexity, called combinatorial complexity, for finite topological spaces.
As is common with papers discussing topological complexity, each includes a computation of the TC of some sort of circle.
In this paper, we compare the TC of models of $S^1$ across each definition, exhibiting some of the nuances of TC that become apparent in the finite setting.
In particular, we show that the TC of finite models of $S^1$ can be 3 or 4 and that the TC of the minimal finite model of any $n$-sphere is equal to 4.
Furthermore, we exhibit spaces weakly homotopy equivalent to a wedge of circles with arbitrarily high TC.
\end{abstract}

\section{Introduction}
Farber introduced the notion of topological complexity in \cite{Farber2001} as it relates to motion planning in robotics.
Informally, the topological complexity of a robot's space of configurations represents the minimal number of continuous motion planning rules required to instruct that robot to move from one position into another position.
Although topological complexity was originally defined for robots with a smooth, infinite range of motion (e.g. products of spheres or real projective space), it makes sense to consider the topological complexity of finite topological spaces.
For example, one could determine the topological complexity of a finite state machine or a robot powered by stepper motors.

This paper was motivated by learning that the topological complexity of $S^1$ does not agree with that of its minimal finite model.
It is well-known from \cite{Farber2001} that $\text{TC}(S^n) = 2$ for $n$ odd and 3 for $n \geq 2$ even.
Upon further inspection, it became clear the not all models of $S^1$ have the same topological complexity.
In \cite{Tanaka2018}, Tanaka proves that $\text{TC}(\mathbb{S}^1) = 4$, where $\mathbb{S}^1$ is the minimal model of $S^1$ comprising four points. 
This value drops as the size of the model of $S^1$ increases.

\begin{thm}
\label{TCS13}
For the finite model $\mathbb{S}^1_n$ of $S^1$ comprising $2n$ points for $n >2$, \[\text{TC}(\mathbb{S}_n^1) \leq 3.\]
\end{thm}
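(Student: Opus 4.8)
The plan is to use the standard reformulation of motion planning in terms of the two coordinate projections. Recall that a subset $U\subseteq X\times X$ admits a continuous motion planning rule — a continuous section of the path fibration over $U$ — if and only if the two projections $\mathrm{pr}_1,\mathrm{pr}_2\colon U\to X$ are homotopic. Indeed, by the exponential law (the interval $I$ being locally compact Hausdorff) a section $U\to X^I$ is precisely a homotopy $U\times I\to X$ from $\mathrm{pr}_1|_U$ to $\mathrm{pr}_2|_U$. Thus it suffices to cover $\mathbb{S}^1_n\times\mathbb{S}^1_n$ by three open sets on each of which the restricted projections are homotopic.

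Adopting the convention that open sets are up-sets, write the $2n$ points of $\mathbb{S}^1_n$ as closed (minimal) points $d_1,\dots,d_n$ and open (maximal) points $c_1,\dots,c_n$, with $c_i>d_i,d_{i+1}$ (indices mod $n$), so that the order complex is the $2n$-cycle. For two distinct minimal points $d,d'$ set $A=\mathbb{S}^1_n\setminus\{d\}$ and $B=\mathbb{S}^1_n\setminus\{d'\}$. Each is open, since a minimal point is closed, and each is contractible, since deleting one vertex of the $2n$-cycle leaves a fence, which collapses to a point by successively removing beat points. As $A\cup B=\mathbb{S}^1_n$, the three open sets $A\times A$, $B\times B$, and $W=(A\times B)\cup(B\times A)$ cover $\mathbb{S}^1_n\times\mathbb{S}^1_n$.

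On $A\times A$ both projections factor through the contractible set $A$ and are therefore null-homotopic in $X$, hence homotopic to one another; the same holds on $B\times B$. On each of the pieces $A\times B$ and $B\times A$ the two projections are again null-homotopic, one factoring through $A$ and the other through $B$, so the projections are homotopic on each piece separately. This settles two of the three sets at once and reduces the theorem to the mixed set $W$.

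The main obstacle is to promote homotopy of the projections on each piece of $W$ to a single homotopy on all of $W$. The two pieces overlap along $(A\cap B)\times(A\cap B)$, so a naive gluing of the two null-homotopies need not be continuous, and one must instead argue component by component, checking that no connected component of $W$ winds around the circle under $\mathrm{pr}_1$ or $\mathrm{pr}_2$ (each projection being null-homotopic on a component as soon as that component's image lies in a proper sub-arc). This is precisely the step that uses $n>2$: choosing $d,d'$ non-adjacent, the overlap $A\cap B$ is a disjoint union of two contractible arcs, and for $2n\ge 6$ these arcs are large enough to confine every component of $W$ to a proper arc of $\mathbb{S}^1_n$ under both projections, so the projections remain homotopic and a third motion planner exists. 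For the minimal model $n=2$ the same overlap degenerates into two isolated points, a component of $W$ is forced to wrap around $\mathbb{S}^1$, the projections there have different winding, and no such third rule exists — consistent with Tanaka's value $\mathrm{TC}(\mathbb{S}^1)=4$.
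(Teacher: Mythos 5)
Your reformulation (a section of $\pi$ over $U$ exists iff $\mathrm{pr}_1|_U\simeq\mathrm{pr}_2|_U$) is correct, and your sets $A\times A$ and $B\times B$ are fine — they are exactly the kind of sets the paper uses. But the third set is fatally flawed, and the difficulty you flag as a gluing technicality is in fact an insurmountable obstruction. Note that
\[
W=(A\times B)\cup(B\times A)=\bigl(\mathbb{S}^1_n\times\mathbb{S}^1_n\bigr)\setminus\{(d,d),(d',d')\},
\]
i.e.\ $W$ is the whole product minus two diagonal points. In particular, for any third closed point $z\notin\{d,d'\}$ (which exists since $n\geq 3$), $W$ contains the entire slice $\mathbb{S}^1_n\times\{z\}$. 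The paper's Theorem \ref{TCxX} says precisely that an open set admitting a continuous section can contain such a slice only if the space is contractible; since $\mathbb{S}^1_n$ is not contractible, $W$ admits no motion planner for \emph{any} $n$. Concretely: the composite $\mathbb{S}^1_n\cong\mathbb{S}^1_n\times\{z\}\hookrightarrow W\xrightarrow{\mathrm{pr}_1}\mathbb{S}^1_n$ is the identity, while the composite with $\mathrm{pr}_2$ is constant, so a homotopy $\mathrm{pr}_1|_W\simeq\mathrm{pr}_2|_W$ would make $\mathrm{id}$ nullhomotopic, contradicting $\pi_1(\mathbb{S}^1_n)=\mathbb{Z}$. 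Your proposed rescue — arguing component by component and confining each component to a proper arc — cannot work, because $W$ is connected: $A\times B$ and $B\times A$ are each connected (products of connected sets) and they meet in the nonempty set $(A\cap B)\times(A\cap B)$; moreover $\mathrm{pr}_1(W)=\mathbb{S}^1_n$, so no confinement to a proper arc is possible.

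The repair, which is the paper's actual proof, is to keep every set of the covering a product of a contractible open set \emph{with itself}: take three distinct closed points $y_1,y_2,y_3$ (here is where $n\geq 3$ is genuinely used) and set $Q_i=(\mathbb{S}^1_n\setminus\{y_i\})\times(\mathbb{S}^1_n\setminus\{y_i\})$. Each $Q_i$ is open and contractible, hence admits a planner, and $Q_1\cup Q_2$ misses only the two points $(y_1,y_2)$ and $(y_2,y_1)$, neither of which involves $y_3$, so both lie in $Q_3$. The moral difference from your construction: your $W$ is the union of two ``mixed'' products, which is too large, whereas the bound $3$ is achieved only by trading the mixed products for a third self-product centered at a third deleted point.
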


We provide an alternative proof to Tanaka's result that $\text{TC}(\mathbb{S}^1)=4$ that can be generalized to non-Hausdorff suspensions of finite spaces, and therefore finite models of higher dimensional spheres.

\begin{thm}\label{TCsusp}
If $X$ is a finite $T_0$ space and $Y:= X \varoast \mathbb{S}^0$ is the non-Hausdorff suspension of $X$, then 
\[
\text{TC}(Y) = \left\{ \begin{array}{ll} 1,&X \textrm{ is contractible}\\4,&X\textrm{ is not contractible}\end{array}\right.
\]
\end{thm}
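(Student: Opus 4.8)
The plan is to work throughout with the standard characterization of topological complexity by open covers, which in Tanaka's framework for finite $T_0$ spaces reads: $\text{TC}(Y)\leq k$ exactly when $Y\times Y$ is covered by $k$ open sets $V_1,\dots,V_k$ such that on each $V_i$ the two projections $\mathrm{pr}_1,\mathrm{pr}_2\colon V_i\to Y$ are homotopic (equivalently, the endpoint evaluation admits a section over $V_i$). Since homotopy of maps between finite spaces is generated by the pointwise order, every assertion reduces to comparability data. Write the two suspension points as $w_+,w_-$, the maximal points of $Y$; their minimal open neighborhoods are the cones $U_{w_\pm}=X\cup\{w_\pm\}$, each contractible, with $U_{w_+}\cap U_{w_-}=X$. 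I would first dispose of the contractible case: if $X$ is contractible it strong collapses to a point, and each beat point of $X$ remains a beat point of $Y$, since adding $w_+,w_-$ above all of $X$ alters neither the down-set of any point nor the minimum of any up-set that defines an up-beat point. Hence $Y\searrow\{x_0\}\varoast\mathbb{S}^0\searrow\ast$, so $Y$ is contractible and $\text{TC}(Y)=1$. Run in reverse, the same bookkeeping shows that when $X$ is \emph{not} contractible its core is a minimal finite space $M$ with at least two points, that $Y\searrow M\varoast\mathbb{S}^0$, and that $M\varoast\mathbb{S}^0$ has no beat points (a minimal space with $\geq 2$ points has no maximum, so neither suspension point is a down-beat point); thus $Y$ is not contractible, a fact the lower bound will use.

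For the upper bound $\text{TC}(Y)\leq 4$ I would take the four open sets $U_{w_i}\times U_{w_j}$ with $i,j\in\{+,-\}$, which cover $Y\times Y$ because $U_{w_+}\cup U_{w_-}=Y$. On $U_{w_i}\times U_{w_j}$ the first projection lands in the contractible cone $U_{w_i}$, so $\mathrm{pr}_1\simeq\mathrm{const}_{w_i}$, and likewise $\mathrm{pr}_2\simeq\mathrm{const}_{w_j}$; as $Y$ is connected, $\mathrm{const}_{w_i}\simeq\mathrm{const}_{w_j}$, whence $\mathrm{pr}_1\simeq\mathrm{pr}_2$. Note this bound is insensitive to whether $X$ is contractible.

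The real content is the lower bound $\text{TC}(Y)\geq 4$ when $X$ is not contractible. The product $Y\times Y$ has exactly four maximal points, the pairs $(w_i,w_j)$, and any open set containing $(w_i,w_j)$ must contain its minimal neighborhood $U_{w_i}\times U_{w_j}$. I would prove that no open set on which $\mathrm{pr}_1\simeq\mathrm{pr}_2$ can contain two distinct maximal points; since there are four, at least four cover elements are then forced. If the two maximal points share a coordinate, say $(w_+,w_+)$ and $(w_+,w_-)$, then $V\supseteq U_{w_+}\times Y$, and restricting a homotopy $\mathrm{pr}_1\simeq\mathrm{pr}_2$ to the slice $\{w_+\}\times Y$ gives $\mathrm{id}_Y\simeq\mathrm{const}$, i.e. $Y$ contractible, contradicting the first paragraph.

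The remaining \emph{diagonal} case $(w_+,w_+),(w_-,w_-)$ (and its mirror) is the main obstacle. Here $V$ need only contain $W:=(U_{w_+}\times U_{w_+})\cup(U_{w_-}\times U_{w_-})=Y\times Y\setminus\{(w_+,w_-),(w_-,w_+)\}$, and the easy slices recover only the always-satisfied nullhomotopy of $X\hookrightarrow Y$, so the obstruction is genuinely \emph{global}. I would view $W$ as the two contractible squares $U_{w_\pm}\times U_{w_\pm}$ glued along $X\times X$, a non-Hausdorff double cone on $X\times X$, and argue that a homotopy $\mathrm{pr}_1\simeq\mathrm{pr}_2$ over $W$ reconciles the two cone-nullhomotopies (through $w_+$ and through $w_-$) along $X\times X$, out of which I would extract a strong deformation retraction of $X$ to a point, forcing $X$ contractible. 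Because the dichotomy is genuine, not merely weak, contractibility — indeed $Y$ may be weakly contractible while $X$ is not — this extraction must be carried out with finite-space, beat-point methods rather than classical homotopy invariants, and this is precisely the step generalizing Tanaka's computation $\text{TC}(\mathbb{S}^1)=4$; I expect it to be the crux. Combining both cases shows every section-admitting open set meets at most one maximal point of $Y\times Y$, giving $\text{TC}(Y)\geq 4$ and hence $\text{TC}(Y)=4$.
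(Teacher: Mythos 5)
Your overall architecture matches the paper's: the four products of cone neighborhoods give the upper bound $\mathrm{cat}(Y\times Y)\le 4$, the contractible case is handled by collapsing (the paper cites Proposition 2.7.3 of Barmak for both directions of the dichotomy, where your core/beat-point bookkeeping is a fine substitute), and your shared-coordinate case is exactly the paper's argument: the paper's Theorem \ref{TCxX} is precisely your slice principle, that an open set admitting a section which contains a full slice $\{*\}\times Y$ or $Y\times\{*\}$ forces $Y$ to be contractible. The problem is the diagonal case $(w_+,w_+),(w_-,w_-)$, which you explicitly leave unproven: your plan to ``reconcile the two cone-nullhomotopies along $X\times X$ and extract a strong deformation retraction of $X$'' is only a sketch, you yourself flag it as the expected crux, and no argument is given for why such an extraction exists. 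As it stands this is a genuine gap, and it sits exactly at the step that distinguishes the theorem from a routine computation.

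Moreover, your diagnosis of that case is incorrect: the obstruction there is \emph{not} global, and the same slice argument you used in the shared-coordinate case applies verbatim. The observation you are missing --- and it is exactly how the paper's proof disposes of this case --- is that $U_{w_+}\cap U_{w_-}=X$, so the two squares overlap in a way that manufactures full slices: for any $x\in X$ we have $\{x\}\times U_{w_+}\subseteq U_{w_+}\times U_{w_+}$ and $\{x\}\times U_{w_-}\subseteq U_{w_-}\times U_{w_-}$, hence
\[
\{x\}\times Y \;=\; \{x\}\times\bigl(U_{w_+}\cup U_{w_-}\bigr)\;\subseteq\; W\;\subseteq\; V.
\]
Restricting a homotopy $\mathrm{pr}_1\simeq \mathrm{pr}_2$ over $V$ to this slice gives $\mathrm{const}_x\simeq \mathrm{id}_Y$, i.e.\ $Y$ contractible, the same contradiction as before. (Your claim that ``the easy slices recover only the always-satisfied nullhomotopy of $X\hookrightarrow Y$'' presumably comes from restricting to $X\times X$; the point is that $W$ contains the much larger slices $\{x\}\times Y$, whose second coordinate is allowed to be \emph{either} suspension point --- only the two mixed pairs $(w_+,w_-),(w_-,w_+)$ are excluded from $W$, and they have no $x$-coordinate in $X$.) Your worry that finite-space subtleties (e.g.\ $Y$ weakly contractible but not contractible) block classical arguments is also moot once this is seen: the slice argument produces an actual homotopy $\mathrm{id}_Y\simeq\mathrm{const}$, not merely a weak equivalence, so non-contractibility of $Y$ is all that is needed, and you had already established that. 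With this one observation your proof closes and becomes the paper's proof.
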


As a consequence, $\text{TC}(\mathbb{S}^n)=4$ for $n \geq 1$, where $\mathbb{S}^n$ is the minimal finite model of $S^n$ comprising $2n+2$ points.

Lastly, we exhibit finite topological spaces weakly homotopy equivalent to a wedge of circles with arbitrarily high topological complexity. 
Properties of the Lusternik-Schnirelmann category can be used to show that $\text{TC}(\bigvee_n S^1) = 3$ where $\bigvee_n S^1$ is a wedge of $n$ copies of $S^1$.
Contrastingly, as $n$ increases, so does the topological complexity of a finite space weakly homotopy equivalent to $\bigvee_n S^1$.
We show this by proving the following result about non-Hausdorff joins of discrete spaces. 

\begin{thm}\label{TCjoin}
Let $X = \{x_1,\hdots,x_m\}$ and $Y=\{y_1,\hdots,y_n\}$ be finite spaces with $m,n>1$, each equipped with the discrete topology.
Take their non-Hausdorff join to be $Z:=X\varoast Y$.
Then $\text{TC}(Z) = n^2$ and $\text{TC}(Z^\text{op})=\text{TC}(Y\varoast X)=m^2$.
\end{thm}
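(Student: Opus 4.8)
The plan is to work entirely with the poset presentation of $Z$ and the description of $\mathrm{TC}$ for finite spaces used in the proof of Theorem~\ref{TCsusp}: $\mathrm{TC}(Z)$ is the least number of open sets covering $Z\times Z$ on each of which there is a continuous motion planner, i.e.\ a continuous section of the endpoint map $Z^{I}\to Z\times Z$. As a poset, $Z=X\varoast Y$ is bipartite: the $x_i$ are the minimal points and the $y_j$ the maximal points, with $x_i<y_j$ for all $i,j$ and no other relations, so that the order complex of $Z$ is the complete bipartite graph $K_{m,n}$. Each $x_i$ is open, while the minimal open set of a maximal point is $U_{y_j}=\{x_1,\dots,x_m,y_j\}$. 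Consequently the maximal points of the product poset $Z\times Z$ are exactly the $n^{2}$ pairs $(y_j,y_k)$, and any open set containing $(y_j,y_k)$ must contain its minimal open neighborhood $U_{y_j}\times U_{y_k}\supseteq X\times X$. The whole computation will hinge on showing that these top pairs can never be shared by a single good chart.

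First I would establish the upper bound $\mathrm{TC}(Z)\le n^{2}$. For each pair $(j,k)$ take the chart $U_{y_j}\times U_{y_k}$; these $n^{2}$ sets cover $Z\times Z$ because each coordinate of each point lies in some $U_{y_\ell}$. Each chart has a maximum, namely $(y_j,y_k)$, hence is contractible as a finite space, and I would write down an explicit planner on it: contracting $U_{y_j}\times U_{y_k}$ to $(y_j,y_k)$ produces, for every $(a,b)$, continuous paths $a\leadsto y_j$ and $b\leadsto y_k$, and inserting a single fixed path $y_j\leadsto y_k$ (which exists since $Z$ is connected) joins them into a path $a\leadsto b$ varying continuously with $(a,b)$.

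The heart of the argument is the lower bound $\mathrm{TC}(Z)\ge n^{2}$, for which I would prove that any open set carrying a planner contains at most one maximal point of $Z\times Z$. A planner over an open $U$ makes the two coordinate projections $p_1,p_2\colon U\to Z$ homotopic, hence induces the same homomorphism on the first homology of the order complexes; so it suffices to show that if $U$ contains two distinct top pairs then $p_{1\ast}\ne p_{2\ast}$ on $H_1$. Assuming the pairs differ in the first coordinate, say $(y_j,y_k)$ and $(y_{j'},y_{k'})$ with $j\ne j'$, and using $m>1$ to choose $x_1\ne x_2$, I would exhibit the four-cycle with consecutively comparable vertices $(y_j,x_1),(x_1,x_1),(y_{j'},x_1),(x_2,x_1)$, which lies in $U_{y_j}\times U_{y_k}\cup U_{y_{j'}}\times U_{y_{k'}}\subseteq U$ since every $x_i$ belongs to every $U_{y_\ell}$. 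Its second projection is constant, whereas its first projection is the essential cycle $y_j-x_1-y_{j'}-x_2-y_j$ of $K_{m,n}$; thus $p_{2\ast}$ kills this class while $p_{1\ast}$ does not, so no planner exists on $U$. Since there are $n^{2}$ top pairs and each good chart meets at most one of them, any cover by good charts has at least $n^{2}$ members, giving $\mathrm{TC}(Z)=n^{2}$; reversing the roles of $X$ and $Y$ (so that $X$ becomes the maximal layer of $Z^{\mathrm{op}}=Y\varoast X$) yields $\mathrm{TC}(Z^{\mathrm{op}})=m^{2}$.

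The main obstacle is exactly this lower bound, and the subtle point is that the obstruction must be read off chart by chart from the constrained supply of open sets in the non-Hausdorff topology. Ordinary weak-homotopy invariants of $Z$ are useless here: $Z$ is weakly equivalent to a wedge of circles, whose zero-divisor cup length would only bound $\mathrm{TC}$ by $3$. The essential phenomenon is instead combinatorial, namely that every open set large enough to contain two top pairs is \emph{forced} by the poset structure to contain an essential cycle witnessing $p_1\not\simeq p_2$. I would also need to confirm that the homotopy/homology comparison of the projections is legitimate in the finite setting and that restricting attention to minimal open charts costs nothing, both of which should follow from the machinery assembled for Theorem~\ref{TCsusp}.
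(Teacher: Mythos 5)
Your proposal is correct, and while your upper bound coincides with the paper's, your lower bound takes a genuinely different route. The upper bound is the same in both: cover $Z\times Z$ by the $n^2$ minimal open sets $(y_j,y_k)^\downarrow=y_j^\downarrow\times y_k^\downarrow$, each of which has a maximum, hence is contractible and admits a planner. For the lower bound, the paper adapts Tanaka's Example 4.5: if an open set $Q$ with a section $s$ contains two distinct maximal points of $Z\times Z$, then $X\times X\subseteq Q$, and one analyzes the actual section values $s(x_i,x_j)$ and $s(x_j,x_i)$ pointwise, using that continuity of $s$ forces order-preservation (from $(x_i,x_j)\leq(m,m')$ one gets $s(x_i,x_j)\leq s(m,m')$) to conclude that the two maximal points must coincide --- a purely order-theoretic analysis of paths in the finite space. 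You instead invoke the standard fibration-theoretic obstruction: a section over $U$ yields a homotopy $p_1|_U\simeq p_2|_U$ via the exponential law, and you defeat this with an explicit loop, the fence $(y_j,x_1)\geq(x_1,x_1)\leq(y_{j'},x_1)\geq(x_2,x_1)\leq(y_j,x_1)$, whose second projection is constant while its first projection traverses a $4$-cycle of $K_{m,n}$, nontrivial in $H_1(Z)$ via the McCord equivalence; hence $p_{1*}\neq p_{2*}$ on $H_1$ and no planner can exist on $U$. What each buys: your argument is more conceptual, ties the computation to the mainstream characterization of section domains in the TC literature, and makes transparent why no weak-homotopy invariant of $Z$ alone (e.g.\ zero-divisor cup length, which only gives $3$) can detect the answer --- the obstruction lives chart by chart; the paper's argument is more elementary in the finite-space sense, needing no McCord map, no singular homology, and no verification that fence loops realize edge-path classes, which are exactly the standard-but-nontrivial facts you correctly flag as requiring justification (they do hold: comparable points are joined by canonical paths, and such loops map to the corresponding edge loops of $|\mathcal{K}(Z)|$ under the weak equivalence). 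Two small patches you should make: treat the case where the two top pairs agree in the first coordinate and differ only in the second (your WLOG covers $j\neq j'$ only; the symmetric fence $(x_1,y_k),(x_1,x_1),(x_1,y_{k'}),(x_1,x_2)$ handles $k\neq k'$), and state explicitly that homotopic maps induce equal homomorphisms on singular $H_1$, with $H_1(Z)\cong H_1(K_{m,n})$, so the constant-versus-essential comparison is legitimate.
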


\subsection{Notions of Topological Complexity}

In this section, I will review different notions of topological complexity for simplicial complexes and finite spaces and compare how they behave when applied to different models of circles.

It is important to note that Farber's original definition of topological complexity introduced in \cite{Farber2001} uses the unreduced Schwarz genus.
The best-known upper and lower bounds for topological complexity are
\[\textrm{zcl}(X) < \textrm{TC}(X) \leq \textrm{cat}(X \times X),\]
where the {\bf zero-divisors cup-length} $\text{zcl}(X)$ is the cup-length of $\ker(\Delta^*:H^*(X^2)\to H^*(X))$, and the {\bf Lusternik Schnirelmann category} $\text{cat}(X \times X)$ is the minimal number of open sets covering $X \times X$ whose inclusion map is nullhomotopic (these open sets are called {\bf categorical}).
Because of the strict inequality $\textrm{zcl}(X) < \textrm{TC}(X)$, many papers after Farber's subtract one from the definitions of $\text{TC}(X)$ and Lusternik-Schnirelmann category such that the upper- and lower-bounds may be equal in some cases.
All values of topological complexity given in this paper are unreduced, and we mention in the footnotes when this differs from an author's definition.

Although topological complexity has only been discussed over the last two decades, its formal definition draws from the Schwarz genus, defined in \cite{Schwarz1958} in 1958.
\begin{defn}
The {\bf Schwarz genus} $\mathfrak{g}(p)$ of a fibration $p:E \to B$ is the minimal number $k$ such that there exists an open covering $Q_1,\hdots,Q_k$ of $B$ with each $Q_i$ admitting a local $p$-section.
\end{defn}

\begin{defn}
Given a path-connected space $X$ and projection map $\pi:X^I \to X \times X$ that sends a path $\gamma$ to $\pi(\gamma)=(\gamma(0),\gamma(1))$, the {\bf topological complexity} of $X$, denoted $\textrm{TC}(X)$, is equal to $\mathfrak{g}(\pi)$.
\end{defn}

\begin{ex}
Farber proves $\text{TC}(S^1)=2$ by building an explicit motion planner.
The two open sets covering $S^1 \times S^1$ are given by
\begin{itemize}
\item $Q_1 = \{(x,y) \in S^1 \times S^1|x\neq-y\}$
\item $Q_2 = \{(x,y) \in S^1 \times S^1 | x\neq y\}$
\end{itemize}
with motion planner $s_1:Q_1 \to X^I$ traveling the shortest arc from $x$ to $y$ at constant speed, and $s_2:Q_2\to X^I$ moving at constant speed from $x$ to $y$ in a predetermined direction.
\end{ex}

In \cite{Gonzalez2017}, Gonz\'{a}lez defines an analog of topological complexity for simplicial complexes, called {\bf simplicial complexity} and denoted $\text{SC}(K)$ for a simplicial complex $K$.\footnote{In \cite{Gonzalez2017}, the definition of simplicial complexity is reduced.}
Gonz\'{a}lez' definition is adapted from Iwase and Sakai's intepretation of topological complexity as a fibrewise Lusternik-Schnirelmann category, introduced in \cite{Iwase2012}.
Their notion agrees with Farber's topological complexity of the geometric realization of $K$, as proven in Theorem 1.6 of \cite{Gonzalez2017}:
\[SC(K)=\textrm{TC}(|K|)\]

As a consequence, $\text{SC}(K)=2$ for any complex whose realization has the homotopy type of an odd sphere.
In particular, this includes $K$ such that $|K|\simeq S^1$.
They demonstrate this with $S^1$ modeled by the 1-skeleton of the 2-dimensional simplex $\Delta^2$, denoted $S_1$.
The open sets of $S_1 \times S_1$ admitting continuous motion planning rules follow Farber's construction closely. One collapses to $\{(x,x) \in S_1\times S_1\}$, and one to $\{(x,-x) \in S_1 \times S_1\}$.

While Gonz\'{a}lez' definition of topological complexity for simplicial complexes involves taking repeated barycentric subdivisions, the definition of {\bf discrete topological complexity} in \cite{Fernandez-Ternero2018} is defined in purely combinatorial terms.\footnote{In \cite{Fernandez-Ternero2018}, the definition of discrete topological complexity is reduced.}
Fern\'{a}ndez-Ternero, et al. prove in Example 4.9 of that paper that the minimal simplicial model of $S^1$, which is the boundary of a 2-simplex, has discrete topological complexity equal to 3.
For larger simplicial models of $S^1$, Theorem 5.6 of that paper proves the topological complexity drops back down to 2.

Tanaka introduces {\bf combinatorial complexity} (CC) in \cite{Tanaka2018} as an analog of topological complexity for finite spaces.
Tanaka's definition differs from Farber's in that they consider finite models of the interval in place of $I$.
Theorem 3.6 of \cite{Tanaka2018} proves the following:
\[\textrm{It holds that $\text{TC}(X) = \text{CC}(X)$ for any connected finite space $X$.}\]
Because connected finite spaces are path-connected by Proposition 1.2.4 of \cite{Barmak2012}, this is sufficient for defining a notion of topological complexity.
In Example 4.5 of that paper, Tanaka proves that $\text{TC}(\mathbb{S}^1)=4$, which is the result that motivated this paper.

\subsection{Finite Topology}

A thorough review of finite topology can be found in \cite{Barmak2012}.
Here, we define only what is necessary for the context of this paper.
Assume all finite spaces mentioned are $T_0$.

A finite topological space $X$ yields a preorder, $\leq$.
Given a point $x \in X$, its {\bf minimal open neighborhood}, or downset, is $x^\downarrow = \{y \in X \mid y \leq x\}$,
and its {\bf closure}, or upset, is $x^\uparrow = \{y \in X\mid y \geq x\}$.
If $x^\downarrow=x$, then $x$ is an open point; if $x^\uparrow = x$, then $x$ is a closed point.
We say these points are {\bf minimal} and {\bf maximal}, respectively.
For all $x \in X$, both $x^\downarrow$ and $x^\uparrow$ are contractible.
A point $x \in X$ is {\bf beat} if either $x^\uparrow-\{x\}$ has a unique minimal element, or $x^\downarrow-\{x\}$ has a unique maximal element.
Two points $x$ and $y$ are {\bf adjacent} if $x \in y^\downarrow$ or $y \in x^\downarrow$.

A finite $T_0$ space $X$ generates a simplicial complex, $\mathcal{K}(X)$, whose simplices are chains in $X$.
There exists a weak homotopy equivalence $\mu_X: |\mathcal{K}(X)| \to X$ called the {\bf $\mathcal{K}$-McCord map} that sends a point $\alpha \in |\mathcal{K}(X)| $ to $\min(\text{support}(\alpha)) \in X$.

\begin{ex}
A finite model of an interval can be thought of as a space $J_m = \{x_0,x_1,\hdots,x_m\}$ with its order given by $x_0 \leq x_1 \geq x_2 \leq \hdots \lessgtr x_m$.
Equivalently, the minimal open sets forming the basis for the topology on $J_m$ are given by $\{x_i\}$ if $i$ is even and $\{x_{i-1},x_i,x_{i+1}\}$ if $i$ is odd.
It is an easy exercise to show that $|\mathcal{K}(J_m)| \simeq I$.

Such spaces $J_m$ are the analogs of $I$ that \cite{Tanaka2018} uses in their definition of combinatorial complexity.
\end{ex}

\section{Larger finite models of $S^1$}

As a consequence of the examples given in the previous section, it is apparent that topological complexity is not invariant under weak homotopy type.
A reasonable hypothesis might be that $\textrm{TC}(\mathbb{S}^1)=4$ for all finite models of $\mathbb{S}^1$, but this is not the case.

\begin{defn}
The {\bf finite model of $S^1$} with $2n$ points for $n>2$ is the finite topological space \[\mathbb{S}^1_n = \{x_0,x_1,\hdots,x_{n-1},y_0,y_1,\hdots,y_{n-1}\}\] with the minimal open sets $x_i^\downarrow = \{x_i\}$ and $y_i^\downarrow =\{y_i,x_i,x_{i-1\bmod n}\}$ for $0 \leq i < n$.
\end{defn}

The minimal finite model of $S^1$ has two maximal points and two minimal points, which we denote as $\mathbb{S}^1_2$ or $\mathbb{S}^1$ when the context is unambiguous.
We start to prove Theorem \ref{TCS13} by limiting its upperbound.
It may be useful to refer to Figure \ref{S13}, which depicts $\mathbb{S}^1_3 \times \mathbb{S}^1_3$ with gray lines drawn between adjacent points as determined by the product topology.
Note that this visualization has been ``flattened''. 
The edges $\{x_0\}\times \mathbb{S}^1_3$ and $\{y_2\}\times \mathbb{S}^1_3$ are adjacent, and the edges $\mathbb{S}^1_3 \times \{x_0\}$ and $\mathbb{S}^1_3 \times \{y_2\}$ are adjacent.

\begin{thm}
\label{catS13}
Let $\mathbb{S}^1_n$ be the finite model of $S^1$ with $2n$ points, for $n \geq 3$.
Then $cat(\mathbb{S}_n^1\times\mathbb{S}_n^1)\leq 3$.
\end{thm}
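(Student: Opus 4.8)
The plan is to exhibit three open subsets of $\mathbb{S}^1_n \times \mathbb{S}^1_n$ that cover it and whose inclusions into the product are each nullhomotopic. The two facts I would lean on throughout are standard for finite $T_0$ spaces: first, if $f,g\colon Y \to X$ satisfy $f(y)\le g(y)$ for all $y$, then $f \simeq g$, so any open set contained in a downset $p^\downarrow$ (equivalently, lying below a single point $p$) has nullhomotopic inclusion, and more generally the product of contractible finite spaces is contractible; second, a \emph{disjoint} union of contractible open sets is categorical in a path-connected space, since one contracts each clopen piece to a point and then slides those points together along a path, gluing the homotopies over the separated pieces. So it suffices to produce three open sets, each either contractible or a disjoint union of contractibles.

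The construction I would use starts by covering the circle by two contractible open arcs. Set $A = \mathbb{S}^1_n \setminus \{y_0\}$ and $B = \mathbb{S}^1_n \setminus \{y_j\}$ for a suitable $1 \le j \le n-1$; since removing a maximal point from the $2n$-cycle $\mathcal{K}(\mathbb{S}^1_n)$ leaves a fence, both $A$ and $B$ are contractible and open. I then take
\[
W_1 = A \times A, \qquad W_2 = B \times B,
\]
which are products of contractible finite spaces, hence contractible and therefore categorical. A direct check shows that $W_1 \cup W_2$ omits exactly the two ``mixed'' maximal points $(y_0,y_j)$ and $(y_j,y_0)$, since a pair fails to lie in both $A^2$ and $B^2$ only when one coordinate is $y_0$ and the other is $y_j$. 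The third set is then chosen to mop these up:
\[
W_3 = (y_0,y_j)^\downarrow \cup (y_j,y_0)^\downarrow,
\]
an open set (a union of minimal open neighborhoods), each summand of which is contractible because it has the maximum $(y_0,y_j)$, respectively $(y_j,y_0)$.

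The hard part is verifying that $W_3$ is categorical, and this is exactly where the hypothesis $n \ge 3$ enters. For $n \ge 4$ I would choose $j$ with $2 \le j \le n-2$, so that $\{x_0,x_{n-1}\} \cap \{x_j,x_{j-1}\} = \varnothing$ and the two downsets are disjoint; then $W_3$ is a disjoint union of two contractible open sets and is categorical by the gluing remark above. The genuine obstacle is the boundary case $n=3$, where every admissible $j$ forces the two downsets to meet, overlapping in the single minimal point $(x_0,x_0)$; here I cannot invoke the disjoint-union argument, nor is $(x_0,x_0)$ a global minimum of $W_3$, so I would instead verify directly that $W_3$ is contractible by exhibiting an explicit sequence of beat-point removals collapsing it to a point (for instance, $(y_0,x_0)$ is an up-beat point since its strict up-set in $W_3$ is the singleton $\{(y_0,y_1)\}$, and iterating such removals dismantles $W_3$). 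Once $W_3$ is seen to be categorical in both regimes, $\{W_1,W_2,W_3\}$ is a categorical cover of size three, giving $\mathrm{cat}(\mathbb{S}^1_n \times \mathbb{S}^1_n) \le 3$.
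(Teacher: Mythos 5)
Your proof is correct, but it takes a genuinely different route from the paper on the one set where all the work lies. Your $W_1$ and $W_2$ are exactly the paper's first two sets, and the observation that their union misses only $(y_0,y_j)$ and $(y_j,y_0)$ matches the paper's computation. The paper, however, takes the \emph{third} set to be of the same form as the first two: $Q_3 = (\mathbb{S}^1_n \setminus \{y_k\}) \times (\mathbb{S}^1_n \setminus \{y_k\})$ for a third maximal point $y_k$ distinct from $y_0$ and $y_j$ (this is where $n \geq 3$ enters for the paper). This set is contractible by the identical fence/beat-point argument, and it automatically swallows the two uncovered points because neither of them has a coordinate equal to $y_k$ --- more generally, a point of the product has only two coordinates, so it can be excluded from at most two of the three $Q_i$. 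That choice makes the proof uniform in $n$ with no case analysis. You instead patch the two missed points with $W_3 = (y_0,y_j)^\downarrow \cup (y_j,y_0)^\downarrow$, which forces two separate categoricity mechanisms: the disjoint-union-plus-path gluing lemma when $2 \le j \le n-2$ (only available for $n \ge 4$), and an explicit dismantling of the $17$-point poset $W_3$ when $n=3$, where the downsets share the corner $(x_0,x_0)$. Your sketch of the latter does complete: all eight mixed points of $W_3$ are up-beat with singleton strict up-sets $\{(y_0,y_1)\}$ or $\{(y_1,y_0)\}$, after their removal the six minimal points lying under only one top point are up-beat, and the remaining three-point poset collapses; so $W_3$ is genuinely contractible for $n=3$. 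What your construction buys is a much smaller third set --- a neighborhood of just the two uncovered maximal points, in the spirit of the set $D^\downarrow$ the paper later uses in its speculative two-set covering of $\mathbb{S}^1_n \times \mathbb{S}^1_n$ for $n \geq 5$ --- but for the purpose of this theorem the paper's symmetric choice is strictly simpler and avoids both the disjointness constraint on $j$ and the $n=3$ special case.
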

\begin{proof}
Let $\mathbb{S}^1_n$ be as described above, with $n \geq 3$.
We can construct a covering by three contractible open sets, given by

\[Q_i := (\mathbb{S}^1_n-\{y_i\})\times(\mathbb{S}^1_n-\{y_i\}),\]

for $i=1,2,3$.
Since each $y_i$ is a closed point of $\mathbb{S}^1_n$, $\mathbb{S}_n^1- \{y_i\}$ is an open set.
To verify that $\mathbb{S}_n^1- \{y_i\}$ is contractible, notice that $x_{i-1}$ and $x_i$ are beat points of $\mathbb{S}_n^1- \{y_i\}$, so they can be removed while preserving homotopy type.
Next, $y_{i-1}$ and $y_{i+1}$ are beat points of $\mathbb{S}_n^1- \{y_i,x_i,x_{i-1}\}$ that can be removed.
This process can be repeated until only one point remains.
Hence $\mathbb{S}_n^1- \{y_i\}$ is contractible, and so each $Q_i$ is the product of an open contractible space with itself, which is again open and contractible.

It remains to be shown that these three sets cover all of $\mathbb{S}_n^1 \times \mathbb{S}_n^1$.
The first two sets $Q_1 \cup Q_2 =(\mathbb{S}^1_n \times \mathbb{S}^1_n) - \{(y_2,y_1),(y_1,y_2)\}$ cover all but two points.
Since neither of the uncovered points contain an instance of $x_3$, they are both included in $Q_3$.
\end{proof}

\begin{figure}
\begin{tikzpicture}

\draw[lightgray, very thin] (1,1) grid (6,6);

\foreach \x in {1,3,5}
	{
		\draw[lightgray, very thin] (\x,1)--(6,7-\x);
		\draw[lightgray, very thin] (\x+1,6)--(6,\x+1);
		\draw[lightgray, very thin] (1,\x)--(7-\x,6);
		\draw[lightgray, very thin] (\x,1)--(1,\x);
	}


\foreach \x in {1,3,5}
	\foreach \y in {1,3,5}
	{
		\draw (\x,\y) node [shape=circle,fill=black,scale=.5,draw]{};
	}
\foreach \x in {2,4,6}
	\foreach \y in {2,4,6}
	{
		\draw (\x,\y) node [shape=circle,fill=black,scale=.5,draw]{};
	}
\foreach \x in {2,4,6}
	\foreach \y in {1,3,5}
	{
		\draw (\x,\y) node [shape=circle,fill=black,scale=.5,draw]{};
		\draw (\y,\x) node [shape=circle,fill=black,scale=.5,draw]{};
	}

\draw (1,0.5) node {$x_0$};
\draw (2,0.5) node {$y_0$};
\draw (3,0.5) node {$x_1$};
\draw (4,0.5) node {$y_1$};
\draw (5,0.5) node {$x_2$};
\draw (6,0.5) node {$y_2$};

\draw (0.5,1) node {$x_0$};
\draw (0.5,2) node {$y_0$};
\draw (0.5,3) node {$x_1$};
\draw (0.5,4) node {$y_1$};
\draw (0.5,5) node {$x_2$};
\draw (0.5,6) node {$y_2$};

\end{tikzpicture}
\caption{A flattened top-down visualization of $\mathbb{S}^1_3\times \mathbb{S}^1_3$ in which the top and bottom edges adjacent, and the left and right edges adjacent.}
\label{S13}
\end{figure}
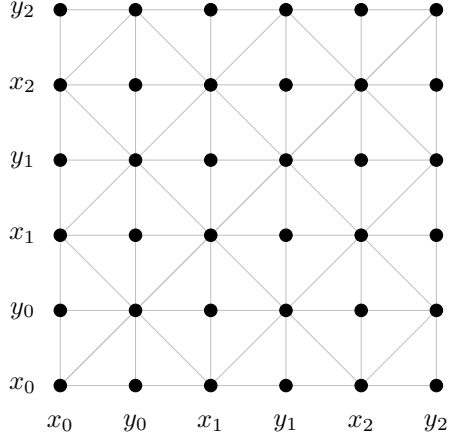

Theorem \ref{TCS13} follows immediately.
Note that the construction of each $Q_i$ above was somewhat arbitrary; many such constructions exist.
One might wonder if $\textrm{TC}(\mathbb{S}^1_n) = \textrm{TC}(S^1)=2$ when $n\geq3$.
In \cite{Gonzalez2017}, the author covers $S_1 \times S_1$ by two sets that collapse onto the diagonal and antidiagonal.
This is not possible for $\mathbb{S}^1_3$ because the elements in the antidiagonal of $\mathbb{S}_3^1 \times \mathbb{S}_3^1$ form a disconnected set, as can be seen in Figure \ref{S13}.
Because there is no connected set that can retract onto a disconnected set, we must pursue an alternate approach to determining $\text{TC}(\mathbb{S}_n^1)$ when $n \geq 3$.
We start by proving the following result, which holds in general for (not necessarily finite) path-connected spaces.

\begin{thm}\label{TCxX}
Let $X$ be a path-connected topological space and $Q \subseteq X \times X$ admitting a continuous section $s$ of the projection map $\pi:X^I \to X \times X$.
Then $X \times \{*\} \subseteq Q$ or $\{*\} \times X \subseteq Q$ if and only if $X$ is contractible.
\end{thm}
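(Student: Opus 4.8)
The plan is to prove the two implications of the biconditional separately, with essentially all of the content living in the forward direction, where the key observation is that a continuous section of $\pi$ defined over a full slice of $X \times X$ is exactly the same data as a contraction of $X$.

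For the forward direction, I would assume without loss of generality that $\{*\} \times X \subseteq Q$ for the basepoint $*$ (the case $X \times \{*\} \subseteq Q$ being symmetric). Restricting the section $s$ to this slice and identifying $\{*\} \times X$ with $X$, I obtain a continuous map $\sigma: X \to X^I$, $\sigma(x) = s(*, x)$, satisfying $\sigma(x)(0) = *$ and $\sigma(x)(1) = x$ for every $x$. I then define $H: X \times I \to X$ by $H(x, t) = \sigma(x)(t)$, the adjoint of $\sigma$ under the exponential correspondence. By construction $H(x, 0) = *$ is constant while $H(x, 1) = x$ is the identity, so $H$ is a homotopy from a constant map to $\mathrm{id}_X$, proving $X$ contractible. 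The case $X \times \{*\} \subseteq Q$ runs identically, producing instead a homotopy from $\mathrm{id}_X$ to the constant map. For the converse, suppose $X$ is contractible via a homotopy $G: X \times I \to X$ from $\mathrm{id}_X$ to the constant map at a point $x_0$. For each pair $(x, y)$ I would concatenate the path $t \mapsto G(x, t)$ from $x$ to $x_0$ with the reverse path $t \mapsto G(y, 1 - t)$ from $x_0$ to $y$; this assembles into a continuous global section $s: X \times X \to X^I$ of $\pi$. Taking $Q = X \times X$ then exhibits a set admitting a continuous section with $\{*\} \times X \subseteq Q$ (and $X \times \{*\} \subseteq Q$) for any choice of $*$, completing this direction.

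The one step that requires care, and the main (admittedly minor) obstacle, is verifying that the adjoint $H$ in the forward direction, and likewise the assembled section in the converse, are genuinely continuous. This is where I would invoke the exponential law: since $I$ is compact Hausdorff, hence locally compact Hausdorff, the evaluation map $X^I \times I \to X$ is continuous, so $H = \mathrm{ev} \circ (\sigma \times \mathrm{id}_I)$ is continuous as a composite of continuous maps, and the concatenated section is continuous by the same adjunction together with the pasting lemma. It is worth emphasizing that this argument is purely topological and never uses finiteness or the $T_0$/Hausdorff properties of $X$ itself — only the local compactness of the interval $I$ serving as the path domain — which is precisely why the statement holds for an arbitrary path-connected $X$.
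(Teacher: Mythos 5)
Your proof is correct, but it reaches the conclusion by a more elementary route than the paper does. Both proofs begin the same way, by restricting the section $s$ to the slice to get $\sigma$; the difference is in how contractibility is extracted. The paper works at the level of maps and homotopy classes: it uses the homotopy equivalence $c : X \to X^I$, $d : X^I \to X$ to run the chain $\iota_\ell = \pi\circ\sigma \simeq \pi\circ c\circ d\circ\sigma \simeq \Delta\circ(d\circ\sigma) = \Delta$, and then composes with $\mathrm{pr}_2$ to conclude $\mathrm{id}_X \simeq \mathrm{const}$. You instead unwind everything through the exponential law: the adjoint $H(x,t) = s(*,x)(t)$ of the restricted section \emph{is} the contraction, with continuity justified by continuity of evaluation $X^I \times I \to X$ (using that $I$ is locally compact Hausdorff). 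These are secretly the same homotopy --- the paper's chain, traced through, produces $(x,u)\mapsto \sigma(x)(1-u)$ up to reparametrization --- but your version makes it explicit and avoids any appeal to the homotopy equivalence $X^I \simeq X$, which is arguably cleaner and more self-contained. The same contrast appears in the converse: the paper simply asserts that $X \times X$ is a contractible (categorical) set and hence admits a section, leaning on standard TC facts, whereas you construct the section explicitly by concatenating the contraction path for $x$ with the reversed contraction path for $y$, again checking continuity via the adjunction and the pasting lemma. Your treatment trades the paper's conceptual brevity for explicitness; both are valid, and your continuity bookkeeping is exactly the point the paper's diagram-level argument leaves implicit.
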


\begin{proof}
Suppose $Q \subseteq X \times X$ admits a continuous $\pi$-section $s$ and, without loss of generality, contains $X \times \{x_0\}$ as a subset  for some $x_0 \in X$.
Let $\sigma$ be the restriction of $s$ to $X \times \{x_0\}$, and consider the following diagram associated to the topological complexity of $X$.
Note that $\Delta = \pi \circ c$, and $\pi \simeq \Delta \circ d$.
\begin{diagram}
X & \pile{\lTo^d \\ \rTo_{c}}&X^I\\
& \rdTo(2,2)_\Delta&\dTo_\pi&\luTo_s \luTo(4,2)^\sigma \\
&& X\times X &\lInto&Q&\lInto&X\times\{x_0\}
\end{diagram}

The map $c$ sends a point in $X$ to the constant path at that point in $X^I$, and $d$ is its homotopy inverse.
Given $\gamma \in X^I$, $d$ may be taken to be $d(\gamma) := \gamma(0)$.
Let $\iota_\ell: X \to  X\times X$ be inclusion into the left factor sending $x$ to $(x,x_0)$ (equivalently, $\iota_\ell:X\times\{x_0\}\to X \times X$ because $X$ and $X \times \{x_0\}$ are homeomorphic).
Then:
\begin{eqnarray*}
\iota_\ell &\simeq&\pi \circ \sigma\\
&\simeq&\pi \circ \text{id}_{X^I} \circ \sigma\\
&\simeq&\pi \circ [c \circ d] \circ \sigma\\
&\simeq&[\pi \circ c] \circ [d \circ \sigma]\\
&\simeq&\Delta \circ \text{id}_X\\
&\simeq&\Delta.
\end{eqnarray*}

Now, if $\iota_\ell \simeq \Delta$, consider their composition with $\text{pr}_2: X \times X \to X$ that projects onto the second factor.
Then $\text{pr}_2 \circ \iota_\ell \simeq \text{pr}_2 \circ \Delta$.
But then $\text{pr}_2 \circ \iota_\ell(x) = \text{pr}_2(x,x_0) = x_0$, and $\text{pr}_2 \circ \Delta(x) = \text{pr}_2(x,x) = x$.
It follows that $\text{id}_X$ is the constant map at $x_0$, which is only the case when $X$ is contractible.

To see the converse, suppose $X$ is contractible.
Then $X \times X$ is a contractible open set covering $X \times X$, hence it admits a continuous $\pi$-section.
\end{proof}

By Theorem \ref{TCxX}, if we are to cover $\mathbb{S}_n^1\times \mathbb{S}_n^1$ with only two open sets that each admit a continuous section, each set can contain neither $\{z\}\times \mathbb{S}_n^1$ nor $\mathbb{S}_n^1\times \{z\}$ as a subset for all $z \in \mathbb{S}_n^1$.
Because $\mathbb{S}_3^1\times \mathbb{S}_3^1$ contains nine maximal elements, this problem is analogous to shading five squares of a $3\times3$ grid such that no column or row is shaded, and such that no vertical or horizontal line between colums and rows is shaded.
We invite the reader to examine Figure \ref{S13} to see that this is not possible.
Because the open sets constructed in Theorem \ref{catS13} are contractible, each $Q_i$ admits a continuous motion planner.
\begin{cor}
$\text{TC}(\mathbb{S}_3^1)=3$.
\end{cor}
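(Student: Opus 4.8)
The plan is to prove the two inequalities $\text{TC}(\mathbb{S}_3^1)\le 3$ and $\text{TC}(\mathbb{S}_3^1)\ge 3$ separately, with the bulk of the work in the latter. The upper bound is essentially already in hand: Theorem \ref{TCS13} (specialized to $n=3$), or equivalently the cover $Q_1,Q_2,Q_3$ constructed in Theorem \ref{catS13}, exhibits three contractible open sets covering $\mathbb{S}_3^1\times\mathbb{S}_3^1$. Since each $Q_i$ is contractible, the inclusion $Q_i\hookrightarrow\mathbb{S}_3^1\times\mathbb{S}_3^1$ is nullhomotopic, so the two coordinate projections agree up to homotopy on $Q_i$ and $Q_i$ admits a continuous $\pi$-section. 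Thus $\text{TC}(\mathbb{S}_3^1)\le 3$, and everything reduces to showing that no cover by two sections exists.

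For the lower bound I would first record that $\text{TC}(\mathbb{S}_3^1)\ge 2$: if the topological complexity were $1$ then $\mathbb{S}_3^1\times\mathbb{S}_3^1$ itself would admit a $\pi$-section, and since this set contains a slice, Theorem \ref{TCxX} would force $\mathbb{S}_3^1$ to be contractible. But $\mathbb{S}_3^1$ is a finite model of $S^1$ and is not contractible. To rule out $\text{TC}(\mathbb{S}_3^1)=2$, I would suppose for contradiction that $\mathbb{S}_3^1\times\mathbb{S}_3^1=Q_1\cup Q_2$ with each $Q_k$ open and admitting a continuous $\pi$-section. By Theorem \ref{TCxX}, since $\mathbb{S}_3^1$ is not contractible, no $Q_k$ may contain any slice $\{z\}\times\mathbb{S}_3^1$ or $\mathbb{S}_3^1\times\{z\}$.

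The next step is to translate this into a statement about the nine maximal points $(y_i,y_j)$, which form a $3\times 3$ grid. Because each $Q_k$ is open, hence a down-set, containing $(y_i,y_j)$ forces it to contain the whole block $y_i^\downarrow\times y_j^\downarrow$; since $\bigcup_j y_j^\downarrow=\mathbb{S}_3^1$, the set $Q_k$ absorbs the slice $\{y_i\}\times\mathbb{S}_3^1$ as soon as it contains the full grid-row $(y_i,y_0),(y_i,y_1),(y_i,y_2)$, and it absorbs the slice $\{x_i\}\times\mathbb{S}_3^1$ as soon as the two cyclically adjacent grid-rows $i$ and $i+1$ jointly meet all three columns (the analogous statements hold for columns). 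Writing $G_k:=\{(i,j):(y_i,y_j)\in Q_k\}$, the forbidden-slice condition therefore implies that each $G_k$ must avoid every full row, every full column, every cyclically adjacent pair of rows meeting all three columns, and every cyclically adjacent pair of columns meeting all three rows.

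Finally, since the nine maximal points are all covered we have $G_1\cup G_2$ equal to the full grid, so by pigeonhole one of them, say $G_1$, has at least five cells. I would conclude with a short counting argument: assuming $G_1$ has no full row or column (else we are already done), it has at most two cells in each row and column, and requiring each cyclically adjacent row-pair to miss a column pins three columns into single rows, capping the total at four cells and contradicting $|G_1|\ge 5$. Hence $\text{TC}(\mathbb{S}_3^1)\neq 2$, and with the bounds above, $\text{TC}(\mathbb{S}_3^1)=3$. The hard part is precisely this last combinatorial passage: faithfully converting ``$Q_k$ contains a slice'' — a condition about all twelve full lines of the $6\times 6$ torus $\mathbb{S}_3^1\times\mathbb{S}_3^1$, not merely the rows and columns of maximal points — into the four constraints on the $3\times 3$ grid, and then verifying that five cells cannot simultaneously dodge them, which is the inspection of Figure \ref{S13} invited in the text.
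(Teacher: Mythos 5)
Your proposal is correct and takes essentially the same route as the paper: the upper bound comes from the contractible cover of Theorem \ref{catS13}, and the lower bound combines Theorem \ref{TCxX} with the pigeonhole/grid-shading argument on the nine maximal elements of $\mathbb{S}_3^1\times\mathbb{S}_3^1$. The only difference is that you spell out the down-set translation and the five-cell counting argument that the paper leaves to the reader's inspection of Figure \ref{S13}.
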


When $n \geq 5$, we can exhibit a covering of $\mathbb{S}^1_n \times \mathbb{S}^1_n$ by two open sets that avoid containing $\mathbb{S}^1_n \times \{*\}$ or $\{*\} \times \mathbb{S}^1_n$:

\begin{ex}
Consider $\mathbb{S}^1_n$ with $n \geq 5$.
Take $D:=\{(x,-x)\subset \mathbb{S}^1_n \times \mathbb{S}^1_n\}^\uparrow$ to be the closure of the anti-diagonal.
Define $Q_1 := (\mathbb{S}^1_n\times \mathbb{S}^1_n)- D$ and $Q_2 := D^\downarrow$.
Because this is a covering of $\mathbb{S}^1_n \times \mathbb{S}^1_n$ by two open sets satisfying the hypotheses of Theorem \ref{TCxX}, and because $\text{TC}(S^1)=2$, we have reason to believe that $\text{TC}(\mathbb{S}^1_n)=2$ when $n\geq 5$, however, we know of no explicit motion planner on these sets.
\end{ex}

\section{Suspensions and Wedges}

\subsection{Suspensions}

It is well known that $\text{TC}(S^n)=2$ for $n$ odd and $3$ for $n$ even.
Here, we show the minimal finite models of $S^n$ built of iterated non-Hausdorff suspensions of $S^0$ have $\text{TC}(\mathbb{S}^n)=4$ for all $n$.

\begin{defn}
The {\bf non-Hausdorff join} of two finite $T_0$ spaces $X$ and $Y$ is given by $X \varoast Y := X \sqcup Y$ with each of $X$ and $Y$ keeping its given ordering, along with $x \leq y$ for all $x \in X, y \in Y$.
If $Y=S^0 = \mathbb{S}^0$, then $X \varoast Y$ is the {\bf non-Hausdorff suspension}.
\end{defn}

Let $\mathbb{S}^0 = \{x_0,y_0\}$ equipped with the discrete topology be the minimal finite model of $S^0$.
We can iteratively construct minimal finite models of spheres by taking the non-Hausdorff suspension of each $\mathbb{S}^n$.
That is, $\mathbb{S}^n = \mathbb{S}^{n-1} \varoast \mathbb{S}^0$.
See that the minimal finite model of any $n$-sphere has two maximal elements.
By Corollary 3.6 of \cite{Tanaka2018}, this means $\text{TC}(\mathbb{S}^n)\leq 4$.
We will now provide an alternative proof of Tanaka's Example 4.5 that can be generalized to the minimal finite model of any $n$-sphere for $n \geq 1$.

\begin{proof}[Proof of Theorem \ref{TCsusp}.]
Let $Y = X \varoast \mathbb{S}^0$ be the non-Hausdorff suspension of any finite $T_0$ space $X$.
The open sets of $Y$ are the open sets of $X$, together with $\{x_0 \cup X\}$ and $\{y_0 \cup X\}$;
the two maximal elements of $Y$ are $x_0$ and $y_0$.
Then $Y\times Y$ has four maximal elements: $\{(x_0,x_0),(x_0,y_0),(y_0,x_0),(y_0,y_0)\}$.
From this, we get $\text{TC}(Y)\leq\text{cat}(Y\times Y)\leq4$.

When $X$ is contractible, $Y=X\varoast S^0$ is contractible by Proposition 2.7.3 of \cite{Barmak2012}, so $\text{TC}(Y)=1$.

Assume $X$ (and therefore $Y$) is not contractible.
If it were to be the case that $\text{TC}(Y)<4$, then one of the open sets $Q \subseteq Y\times Y$ must contain two of those four maximal elements.
Call these distinct maximal elements $(m_1,m_2),(m_1',m_2') \in \{(x_0,x_0),(x_0,y_0),(y_0,x_0),(y_0,y_0)\}$.
Notice that $(m_1,m_2)^\downarrow = m_1^\downarrow \times m_2^\downarrow$.

If $m_1 = m_1'$, then $m_2 \neq m_2'$ because the maximal elements are distinct, so 
\begin{eqnarray*}
Q &\supseteq& (m_1,m_2)^\downarrow \cup (m_1,m_2')^\downarrow\\
&=&(m_1^\downarrow \times m_2^\downarrow) \cup (m_1^\downarrow \times m_2'^\downarrow)\\
&=& m_1^\downarrow \times (m_2^\downarrow \cup m_2'^\downarrow)\\
&=&m_1^\downarrow \times Y\\
&\supseteq& \{m_1\} \times Y,
\end{eqnarray*}
contradicting the assumption that $Y$ is not contractible. Similarly, if $m_2=m_2'$, then $Y \times \{m_2\} \subseteq Q$.

If $m_1 \neq m_1'$ and $m_2 \neq m_2'$, notice for example that there exists an $x \in m_1^\downarrow \cap m_1'^\downarrow = X \subset Y$.
Then
\begin{eqnarray*}
Q &\supseteq &(m_1,m_2)^\downarrow \cup (m_1',m_2')^\downarrow\\
&=&(m_1^\downarrow \times m_2^\downarrow) \cup (m_1'^\downarrow \times m_2'^\downarrow)\\
&\supseteq& (x\times m_2^\downarrow) \cup (x \times m_2'^\downarrow)\\
&=&\{x\} \times (m_2^\downarrow \cup m_2'^\downarrow)\\
&=&\{x\} \times Y,
\end{eqnarray*}
again a contradiction.

%

Hence any covering of $Y\times Y$ by fewer than four open sets cannot admit a continuous motion planner.
Hence $\text{TC}(Y)=4$ when $X$ is not contractible.
\end{proof}

\begin{cor}
For $n>0$, $\text{TC}(\mathbb{S}^n) = 4$.
\end{cor}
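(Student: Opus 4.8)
The plan is to recognize $\mathbb{S}^n$ as a non-Hausdorff suspension and invoke Theorem \ref{TCsusp} directly. By construction $\mathbb{S}^n = \mathbb{S}^{n-1} \varoast \mathbb{S}^0$, so taking $X = \mathbb{S}^{n-1}$ and $Y = \mathbb{S}^n$, the theorem tells us that $\text{TC}(\mathbb{S}^n)$ is either $1$ or $4$ according to whether $\mathbb{S}^{n-1}$ is contractible. Thus the entire computation collapses to a single dichotomy: I only need to confirm that the base $\mathbb{S}^{n-1}$ of the suspension fails to be contractible for every $n > 0$.

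To settle this, I would prove by induction on $m \geq 0$ that $\mathbb{S}^m$ is never contractible. For the base case $m = 0$, the space $\mathbb{S}^0 = \{x_0, y_0\}$ carries the discrete topology and is therefore disconnected, so it cannot be contractible. For the inductive step, suppose $\mathbb{S}^{m-1}$ is not contractible with $m \geq 1$. Then Theorem \ref{TCsusp}, applied to $\mathbb{S}^m = \mathbb{S}^{m-1} \varoast \mathbb{S}^0$, yields $\text{TC}(\mathbb{S}^m) = 4$. Since any contractible space has topological complexity $1$ (it is covered by the single set $\mathbb{S}^m \times \mathbb{S}^m$, which admits a continuous $\pi$-section, exactly as in the converse direction of Theorem \ref{TCxX}), the value $4$ forces $\mathbb{S}^m$ to be non-contractible, completing the induction.

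With the claim in hand, the corollary is immediate: for $n > 0$ the base $X = \mathbb{S}^{n-1}$ is non-contractible, so the second case of Theorem \ref{TCsusp} gives $\text{TC}(\mathbb{S}^n) = 4$. I expect no genuine obstacle here, since the theorem does all the work; the only point requiring care is the non-contractibility of the suspension base, and the induction above keeps this self-contained by bootstrapping off the theorem itself rather than appealing to the $\mathcal{K}$-McCord weak equivalence $\mathbb{S}^m \simeq_w S^m$. That alternative route is also available: a contractible finite space is in particular weakly contractible, which would force the CW complex $S^m$ to be contractible, false for every $m \geq 0$; I prefer the inductive argument only because it avoids invoking Whitehead's theorem.
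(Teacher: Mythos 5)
Your proposal is correct and takes essentially the same approach as the paper: the paper's entire proof is the one-line application of Theorem \ref{TCsusp} with $X = \mathbb{S}^{n-1}$. The paper leaves the non-contractibility of $\mathbb{S}^{n-1}$ implicit, and your bootstrapping induction (using that $\text{TC}=4$ rules out contractibility, with the disconnected $\mathbb{S}^0$ as base case) is a valid, self-contained way to supply that missing detail.
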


\begin{proof}
Taking $X = \mathbb{S}^{n-1}$, this follows from Theorem \ref{TCsusp}.
\end{proof}

The technique of Example 4.5 in \cite{Tanaka2018} works, in fact, for the join of any two discrete spaces.

\begin{proof}[Proof of Theorem \ref{TCjoin}.]
By the non-Hausdorff join, the minimal open sets of $Z$ are $x_i^\downarrow =\{x_i\}$ and $y_j^\downarrow=\{y_j\}\cup X$ for $i \in \{1,\hdots,m\}$ and $j \in \{1,\hdots,n\}$.
Because downsets are contractible, $Z$ can be covered by $n^2$ contractible open sets of the form $(y_i,y_j)^\downarrow = y_i^\downarrow \times y_j^\downarrow$ for $i,j \in \{1,\hdots,n\}$.
This gives an upperbound for $\text{cat}(Z\times Z)$, hence an upperbound for $\text{TC}(Z)\leq n^2$.
If $\text{TC}(Z)<n^2$, an open set admitting a continuous section $s:Q\to Z^I$ must contain at least two maximal elements of $Z\times Z$.
Note that since $X \subset y^\downarrow$ for all maximal $y \in Z$, $X \times X \subset (y_1,y_2)^\downarrow$ for all maximal $(y_1,y_2) \in Z \times Z$.
We can apply Tanaka's argument in Example 4.5 of \cite{Tanaka2018} that proves $\text{TC}(\mathbb{S}^1)=4$:

Since $X\times X \subseteq Q$, there exists an $( x_i , x_j ) \in Q$ with $ x_i \neq x_j$.
Because $\{ x_i , x_j \}$ is a disconnected space, $s( x_i , x_j )$ must pass through some point of $Y \subset Z$.
Note $(x_j,x_i) \in Q$ as well.
Let \[u:=\min\{s( x_i , x_j )^{-1}(Y),s( x_j , x_i )^{-1}(Y)\}\] and \[v:=\max\{s( x_i , x_j )^{-1}(Y),s( x_j , x_i )^{-1}(Y)\}.\]

This means
\[s(x_i,x_j)(t) = \left\{\begin{array}{ll}x_i, & t \in [0,u)\\ x_j,&t \in (v,1] \end{array}\right.
\textrm{ and }
s(x_j,x_i)(t) = \left\{\begin{array}{ll}x_j, & t \in [0,u)\\ x_i,&t \in (v,1] \end{array}\right..\]

Let $(m,m')\in\{(y_i,y_j),(y_i',y_j')\}$ be an arbitrary maximal element of $Q$.
Since $( x_i , x_j )\leq(m,m')$, $s( x_i , x_j )\leq s(m,m')$ by the continuity of $s$.
Similarly, $s( x_j , x_i )\leq s(m,m')$.
Since $s(m,m')|_{[0,u)} \geq  x_i$ and $s(m,m')|_{[0,u)} \geq  x_j $, $s(m,m')$ can never be minimal on that interval, and so $s(m,m')|_{[0,u)}=m$.
Similarly, $s(m,m')|_{(v,1]}=m'$.

By the construction of $u$, at least one of either $s( x_i , x_j )(u)\in Y$ or $s( x_j , x_i )(u)\in Y$.
Without loss of generality, suppose $s( x_i , x_j )(u)=y \in Y$.
Then $s(m,m')(u)\geq s( x_i , x_j )(u)=y$ implies $s(m,m')(u)=y$.
Since $s(m,m')|_{[0,u)}$ is never minimal, it must follow that $s(m,m')|_{[0,u)}=y$ as well, forcing $m=y$.
The choice of $(m,m')$ was arbitrary, so $y_i=y=y_i'$.
By a similar argument, $y_j=y_j'$.

Hence $(y_i,y_j)=(y_i',y_j')$, contradicting our assumption that $Q$ contained two distinct maximal elements, so $\text{TC}(Z)=n^2$.

Because $Z^\text{op}= Y \varoast X$, a similar argument works to show $\text{TC}(Z^\text{op})=m^2$.
\end{proof}

\subsection{Wedges}

In \cite{Zapata2017}, the author proves that \[\text{TC}(X \vee Y) = \max\{\text{TC}(X),\text{TC}(Y),\text{cat}(X \times Y)\}.\]
As a consequence, $\text{TC}(S^1 \vee S^1)=3$.
Any wedge of circles $\bigvee S^1 = \bigsqcup [0,1]/(0\sim1)$ can be covered in two categorical open sets: 
$Q_1 := \bigsqcup (-\epsilon, \epsilon)/0$ for some $0<\epsilon<1$ and 
$Q_2 := \bigsqcup (0,1)$.
By Proposition 2.3 of \cite{James1978}, $\text{cat}(\bigvee S^1 \times \bigvee S^1) \leq 3$, so $\text{TC}(\bigvee S^1) \leq 3$.
If we were to have $\text{TC}(\bigvee S^1) =2$, then $\bigvee S^1$ would have the homotopy type of an odd sphere\footnote{In \cite{Grant2012}, they use the reduced definition of topological complexity.} by Theorem 1 of \cite{Grant2012}. 
Hence, $\text{TC}(\bigvee S^1)=3$.

James proves in Proposition 2.3 of \cite{James1978} that $\text{cat}(X \times X) < 2\text{cat}(X)$.
As stated in Remark 2.7 of \cite{Tanaka2018}, this result does not hold in general for finite spaces.
Here, we prove a result weaker than James', and stronger than the upperbound $\text{cat}(X \times X) \leq (\text{Max}(X)^\#)^2$ proven in Corollary 3.8 of \cite{Tanaka2018}.

\begin{prop}
\label{catX2}
Given a finite space $X$, $\text{cat}(X \times X) \leq \text{cat}(X)^2$.
\end{prop}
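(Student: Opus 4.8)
The plan is to build a categorical cover of $X \times X$ directly out of a minimal categorical cover of $X$ by taking products. Set $k := \text{cat}(X)$ and fix categorical open sets $U_1, \ldots, U_k$ covering $X$, so that each inclusion $\iota_i : U_i \hookrightarrow X$ is nullhomotopic. The candidate cover of $X \times X$ is the collection of the $k^2$ sets $U_i \times U_j$ for $i, j \in \{1, \ldots, k\}$.

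First I would check that this is genuinely an open cover. Each $U_i \times U_j$ is open in the product topology, and their union is $\left(\bigcup_i U_i\right) \times \left(\bigcup_j U_j\right) = X \times X$, since the $U_i$ already cover $X$. This is the only place the covering hypothesis on the $U_i$ is used.

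The heart of the argument is to verify that each $U_i \times U_j$ is categorical in $X \times X$, i.e. that the inclusion $U_i \times U_j \hookrightarrow X \times X$ is nullhomotopic. The key observation is that this inclusion is precisely the product map $\iota_i \times \iota_j$. Given nullhomotopies $H^i : U_i \times I \to X$ from $\iota_i$ to a constant map and $H^j : U_j \times I \to X$ from $\iota_j$ to a constant map, I would define $H : (U_i \times U_j) \times I \to X \times X$ by $H((a,b), t) = (H^i(a,t), H^j(b,t))$, which is continuous and contracts $\iota_i \times \iota_j$ to a constant map. Hence each $U_i \times U_j$ is categorical, and collecting the $k^2$ sets gives $\text{cat}(X \times X) \leq k^2 = \text{cat}(X)^2$.

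The main point to be careful about, rather than a genuine obstacle, is understanding why this product-cover technique cannot recover James' sharper bound $\text{cat}(X \times X) < 2\,\text{cat}(X)$: that inequality relies on amalgamating the two covers into fewer sets, an operation that fails for finite spaces as noted in Remark 2.7 of \cite{Tanaka2018}. The product construction is robust precisely because it makes no use of such amalgamation and relies only on the fact that a product of nullhomotopic maps is nullhomotopic, which holds for arbitrary topological spaces and in particular for finite $T_0$ spaces.
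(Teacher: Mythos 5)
Your proposal is correct and matches the paper's proof essentially verbatim: both cover $X \times X$ by the $k^2$ products of the categorical sets and contract each product set via the simultaneous product homotopy $((a,b),t) \mapsto (H^i(a,t), H^j(b,t))$. No gaps here.
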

\begin{proof}
Let $X$ be a finite space and $\{Q_i\}_{i=1}^k$ be a categorial covering by $k$ open sets.
Then $X \times X$ can be covered in $k^2$ open sets $\{Q_i \times Q_j\}_{1 \leq i,j \leq k}$.
Each $Q_i$ has an associated homotopy $h_i: Q_i \times I \to X$ such that $h_i(x,0) = x$ and $h_i(x,1) = x_i$ for some constant $x_i \in X$.
For any $(i,j)$ pair, the product $(h_i \times h_j): Q_i \times I \times Q_j \times I \to X \times X$ is also continuous.
Consider the map $H_{i,j}:Q_i \times Q_j \times I \to X \times X$ given by $H_{i,j}(x,x',t) := (h_i \times h_j)(x,t,x',t)$.
At $t=0$, $H_{i,j}(x,x',0) = (h_i \times h_j)(x,0,x',0) = (h_i(x,0),h_j(x',0)) = (x,x')$ is the inclusion map, and at $t=1$,
$H_{i,j}(x,x',1) = (h_i \times h_j)(x,1,x',1) = (h_i(x,1),h_j(x',1)) = (x_i,x_j)$ is the constant map at the point $(x_i,x_j)$.
Hence each inclusion map $Q_i \times Q_j \hookrightarrow X \times X$ is nullhomotopic, so $\text{cat}(X \times X) \leq \text{cat}(X)^2$.
\end{proof}

\begin{prop}
Let $\bigvee_{i=1}^m \mathbb{S}^1_{n_i}$ with $n_i \geq 2$ be a wedge of $m$ finite models of $\mathbb{S}^1$, each with $2n_i$ points.
Then $\text{TC}(\bigvee_{i=1}^m \mathbb{S}^1_{n_i})\leq 4$.
\end{prop}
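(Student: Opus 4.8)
The plan is to bound $\text{TC}$ from above through the Lusternik–Schnirelmann category, in the spirit of Proposition \ref{catX2}. Writing $W := \bigvee_{i=1}^m \mathbb{S}^1_{n_i}$, the chain $\text{TC}(W) \le \text{cat}(W \times W) \le \text{cat}(W)^2$ reduces the claim to showing $\text{cat}(W) \le 2$; that is, it suffices to cover $W$ by two contractible open sets. Note that $W$ is a connected finite space, hence path-connected by Proposition 1.2.4 of \cite{Barmak2012}, so $\text{TC}(W)$ is defined.

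To fix notation I would form the wedge at a minimal point, identifying the open points $x^{(i)}_0$ of each model to a single basepoint $*$; the maximal points $y^{(i)}_j$ and remaining minimal points $x^{(i)}_j$ of the $i$-th petal keep their order, so that $y^{(i)}_j$ covers $x^{(i)}_j$ and $x^{(i)}_{j-1 \bmod n_i}$. (If the wedge is formed at another point, a symmetric choice of removed points works.) Because $n_i \ge 2$, the points $y^{(i)}_0$ and $y^{(i)}_1$ are distinct in each petal, and I set
\[
U := W - \{\,y^{(i)}_0 : 1 \le i \le m\,\}, \qquad V := W - \{\,y^{(i)}_1 : 1 \le i \le m\,\}.
\]
Each deleted point is maximal, hence closed, so $U$ and $V$ are open; and since every $y^{(i)}_0$ lies in $V$ and every $y^{(i)}_1$ lies in $U$ while all other points lie in both, $U \cup V = W$.

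The heart of the argument is to show that $U$ (and, symmetrically, $V$) is contractible by iterated removal of beat points, one petal at a time. Deleting $y^{(i)}_0$ leaves $x^{(i)}_{n_i-1}$ covered only by $y^{(i)}_{n_i-1}$, making it a beat point; removing it leaves $y^{(i)}_{n_i-1}$ covering only $x^{(i)}_{n_i-2}$, again a beat point. I would continue this alternating collapse down the $i$-th arc, finishing by deleting $x^{(i)}_1$ and then $y^{(i)}_1$ (which by that stage covers only $*$), so that the whole $i$-th petal is absorbed into $*$. The petals meet only at the minimal point $*$, so collapsing one does not disturb another; carrying this out over all $i$ reduces $U$ to the single point $*$, proving $U$ contractible, and likewise $V$.

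With $\text{cat}(W) \le 2$ in hand, Proposition \ref{catX2} yields $\text{cat}(W \times W) \le 4$, whence $\text{TC}(W) \le \text{cat}(W \times W) \le 4$. The step I expect to demand the most care is the contractibility of $U$ and $V$: one must check that the beat-point collapses in distinct petals are genuinely independent and that the shared basepoint $*$ survives every deletion, so that the whole of $U$ collapses down to $*$ rather than stalling at a wedge of arcs.
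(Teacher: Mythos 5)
Your collapse argument is sound, but it establishes the bound for a space different from the one the paper treats, and the parenthetical sentence meant to bridge that difference is where the genuine gap lies. The paper forms the wedge by identifying the \emph{maximal} points, $y_0 := y_{0_1} = \cdots = y_{0_m}$, whereas you identify the minimal points $x^{(i)}_0$; your space is exactly the order-dual of the paper's. For finite spaces this distinction is not cosmetic: $\text{TC}$ is not invariant under order reversal (the paper's own Theorem \ref{TCjoin} gives $\text{TC}(X \varoast Y) = n^2$ but $\text{TC}(Y \varoast X) = m^2$), so a bound proved for the wedge at a minimal point does not automatically transfer to the wedge at a maximal point. Moreover, your claim that ``a symmetric choice of removed points works'' fails there. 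The order-dual of your construction would delete one \emph{minimal} point per petal, but minimal points are open, so the resulting sets are not open; and deleting non-wedge maximal points instead genuinely breaks down when some $n_i = 2$, which the hypothesis permits. Concretely, take $m=2$, $n_1=n_2=2$ and wedge at $y_0$: any connected open set meeting both petals must contain $y_0$, hence $y_0^\downarrow$, so the only contractible open set containing $y^{(1)}_1$ is $(y^{(1)}_1)^\downarrow$; the other set of the cover must then contain $y_0^\downarrow \cup (y^{(2)}_1)^\downarrow = W - \{y^{(1)}_1\}$, which collapses onto a copy of $\mathbb{S}^1_2$ and is not contractible. So this space admits \emph{no} cover by two contractible open sets at all, and your route stalls.

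The paper's proof is built around exactly this obstruction, and the comparison is instructive: Proposition \ref{catX2} never needs contractible sets, only \emph{categorical} ones (nullhomotopic inclusions). The paper covers its wedge by $Q_1 := y_0^\downarrow$, which is contractible, and $Q_2 := \bigsqcup_{i=1}^m \bigl(\mathbb{S}^1_{n_i}-\{y_{0_i}\}\bigr)$, which is disconnected (hence not contractible once $m \geq 2$) but still categorical, since each component is contractible and the ambient wedge is path-connected. The chain $\text{TC}(W) \leq \text{cat}(W \times W) \leq \text{cat}(W)^2 \leq 4$ then runs exactly as in your proposal. For what it is worth, your two-contractible-set covering is correct for the wedge at a minimal point (the petal-by-petal collapses are indeed independent, since distinct petals share only $*$, which is never removed), and it also works verbatim for the wedge at a maximal point whenever every $n_i \geq 3$; the failure is confined to minimal-model petals with $n_i = 2$. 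To prove the proposition in the paper's setting, replace ``contractible'' by ``categorical'' and use the paper's disconnected $Q_2$.
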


\begin{proof}
Let $\bigvee_{i=1}^m \mathbb{S}^1_{n_i}$ be as defined above with each copy of $\mathbb{S}^1_{n_i}$ identified at $y_0:=y_{0_1}=y_{0_2}=\hdots=y_{0_m}$.
We can cover $\bigvee_{i=1}^m \mathbb{S}^1_{n_i}$ in two categorical open sets.
Take $Q_1:= y_0^\downarrow$.
Because downsets are contractible, $\iota: Q_1 \hookrightarrow \bigvee_{i=1}^m \mathbb{S}^1_{n_i}$ is nullhomotopic.
Take $Q_2:= \bigsqcup_{i=1}^m \mathbb{S}^1_{n_i}-\{y_{0_i}\}$.
Each of the $\mathbb{S}^1_{n_i}-\{y_{0_i}\}$ are disjoint and contractible, and $\bigvee_{i=1}^m \mathbb{S}^1_{n_i}$ is connected, so the inclusion map $\iota:Q_2 \hookrightarrow \bigvee_{i=1}^m \mathbb{S}^1_{n_i}$ is nullhomotopic.
Applying Proposition \ref{catX2} gives $\text{TC}(\bigvee_{i=1}^m \mathbb{S}^1_{n_i}) \leq \text{cat}(\bigvee_{i=1}^m \mathbb{S}^1_{n_i})^2\leq 4$.
\end{proof}

If $K$ is an abstract simplicial complex whose realization is a finite wedge of circles, with each circle triangulated by more than three edges, \cite{Fernandez-Ternero2018} shows that $\text{TC}(K)=3$.
It is unknown at this time if we can improve the bound $\text{TC}(\bigvee_{i=1}^m \mathbb{S}^1_{n_i}) \leq 4$.
We can exhibit a series of finite spaces that are also weakly homotopy equivalent to a wedge of circles, but whose topological complexity is arbitrarily high.

\begin{prop}
If $X$ and $Y$ are discrete finite spaces with $|X|=m$ and $|Y|=n$, then $X \varoast Y$ is weakly homotopy equivalent to a wedge of $(m-1)(n-1)$ circles.
\end{prop}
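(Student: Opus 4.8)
The plan is to compute the order complex $\mathcal{K}(Z)$ of $Z := X \varoast Y$ explicitly and recognize its realization as a familiar graph. Since the $\mathcal{K}$-McCord map provides a weak homotopy equivalence $\mu_Z : |\mathcal{K}(Z)| \to Z$, it suffices to determine the homotopy type of $|\mathcal{K}(Z)|$ and then transport it back across $\mu_Z$.

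First I would enumerate the chains in $Z$. Because $X$ and $Y$ are both discrete, no two elements of $X$ are comparable and no two elements of $Y$ are comparable; by the definition of the non-Hausdorff join the only nontrivial order relations are $x_i < y_j$ for every pair $(i,j)$. Consequently the chains of $Z$ are precisely the singletons $\{x_i\}$ and $\{y_j\}$ together with the two-element chains $\{x_i, y_j\}$, and there are no chains of length three or more. Hence $\mathcal{K}(Z)$ is a one-dimensional simplicial complex with $m+n$ vertices and $mn$ edges; in fact it is exactly the complete bipartite graph $K_{m,n}$, with the $x_i$ forming one part and the $y_j$ the other.

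Next I would invoke the standard fact that a connected graph is homotopy equivalent to a wedge of circles whose count equals its first Betti number $b_1 = E - V + 1$. Since every $x_i$ is joined to every $y_j$, the graph $K_{m,n}$ is connected for $m,n \geq 1$, so this applies, and a direct count gives
\[
b_1 = mn - (m+n) + 1 = (m-1)(n-1).
\]
Thus $|\mathcal{K}(Z)| \simeq \bigvee_{(m-1)(n-1)} S^1$, and composing this homotopy equivalence with $\mu_Z$ yields the claim.

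The arithmetic is routine; the only point demanding care is the identification of the order complex as $K_{m,n}$, that is, verifying that discreteness of both factors forces every chain in the join to have length at most two. This is what eliminates all higher-dimensional simplices and leaves a pure graph, whose homotopy type is then pinned down by its Euler characteristic alone.
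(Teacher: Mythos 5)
Your proof is correct and follows essentially the same route as the paper: both pass through the McCord weak equivalence $|\mathcal{K}(Z)| \to Z$, observe that the order complex is a graph with $m+n$ vertices and $mn$ edges, and conclude it is a wedge of $mn-(m+n)+1=(m-1)(n-1)$ circles (the paper collapses a spanning tree, which is exactly the proof of the Betti-number fact you cite). Your explicit identification of $\mathcal{K}(Z)$ as the complete bipartite graph $K_{m,n}$, and the remark that discreteness of $X$ and $Y$ rules out chains of length three, makes a step the paper leaves implicit fully precise.
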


\begin{proof}
Let $X$ and $Y$ be defined as above and take $Z:=X \varoast Y$.
By definition, the McCord map $\mu: |\mathcal{K}(Z)| \to Z$ is a weak homotopy equivalence.
The geometric realization $|\mathcal{K}(Z)|$ has $m+n$ vertices and $mn$ 1-simplices.
Quotienting by any spanning tree yields a simplicial complex with one $0$-simplex and $mn-(m+n-1) = (m-1)(n-1)$ $1$-simplices, which is homotopy equivalent to a wedge of $(m-1)(n-1)$ circles.
Then \[\bigvee_{(m-1)(n-1)}S^1 \rightarrow |\mathcal{K}(Z)| \rightarrow Z\] is a weak homotopy equivalence.
\end{proof}

Below are some bizarre consequences of this and Theorem \ref{TCjoin}.

\begin{ex}
Let $\bigvee S^1$ be a wedge of $(m-1)(n-1)$ circles, and let $X$ and $Y$ be discrete spaces with $|X|=m$ and $|Y|=n$.
Then there exist McCord maps $\mu_1:\bigvee S^1 \to X \varoast Y$ and $\mu_2: \bigvee S^1 \to Y \varoast X$ such that $\text{TC}(\mu_1(\bigvee S^1))=n^2$ and $\text{TC}(\mu_2(\bigvee S^1))=m^2$.
\end{ex}

\begin{ex}
Let $Y$ be a discrete space with $n$ points.
For all discrete, finite $X$ such that $|X|\geq 2$, $\text{TC}(X \varoast Y) = n^2$.
\end{ex}

\section{Concluding Remarks}

It is of interest to note that all of the explicit computations of topological complexity for finite spaces rely on using the Lusternik-Schnirelmann category as an upper-bound.
Specifically, all currently known motion planners for finite spaces are defined on categorical open sets.
We are very interested in examples of finite spaces $X$ for which $\text{TC}(X) < \text{cat}(X \times X)$.

I would like to thank Kohei Tanaka for some insightful email correspondence about Example 4.5 of \cite{Tanaka2018}.


\end{document}